\documentclass[a4paper,12pt,oneside]{article}
\usepackage{hyperref}
\usepackage[T1]{fontenc}
\usepackage{authblk} 
\usepackage{mathtools}
\usepackage{mathrsfs}
\usepackage{amssymb}
\usepackage{xifthen}
\usepackage{amsmath}
\usepackage{amsthm}
\usepackage{latexsym}
\usepackage[all]{xy}
\usepackage{amscd}

\usepackage{geometry}
\newgeometry{
  top=28mm,
  bottom=28mm,
  left=25mm,
  right=10mm
}

\hypersetup{
    colorlinks,
    linkcolor={red},
    citecolor={green},
    urlcolor={blue}
}

\newtheorem{theorem}{Theorem}
\newtheorem{lemma}[theorem]{Lemma}
\newtheorem{corollary}[theorem]{Corollary}

\theoremstyle{definition}

\newtheorem{conjecture}[theorem]{Conjecture}

\DeclareMathOperator{\Aut}{Aut}

\usepackage{verbatim}
\usepackage{xcolor}
\usepackage{authblk}
\hypersetup{
    colorlinks,
    linkcolor={orange},
    citecolor={magenta},
    urlcolor={blue}
}

\title{Solvable groups defined by redundant cyclic presentations}

\author{Ihechukwu Chinyere\thanks{Corresponding author: \texttt{i.chinyere@up.ac.za; ihechukwu@aims.ac.za}}}
\affil{Department of Mathematics and Applied Mathematics, University of Pretoria, Hatfield 0028, Pretoria, South Africa}

\begin{document}
\maketitle

\begin{abstract}
\noindent
We study the two-generator one-relator groups defined by $\langle x_0,x_1\mid U(x_0,x_1)U(x_1,x_0)^{-1}\rangle$, and determine exactly which of them fail to contain a free subgroup of rank $2$. We show that every such group is a solvable Baumslag--Solitar group $BS(1,m)$, and that every integer value $m$ can occur. This disproves a conjecture predicting that only $m\in \{-1,0,1\}$ can occur.

\vspace{1em}
\noindent\textbf{2020 MSC. }20F05, 20F16, 20E06, 20F19.

\noindent\textbf{Keywords.} Cyclic presentation, one-relator group, redundant relator, Baumslag--Solitar group, Tits alternative.
\end{abstract}

\section{Introduction}
\label{sec:intro}

Let \(F_n=F(x_0,\ldots,x_{n-1})\) be the free group of rank \(n\geq 1\), and let \(\theta:F_n\to F_n\) be the shift automorphism defined by \(\theta(x_i)=x_{i+1}\), where subscripts are taken modulo \(n\). Given a non-empty cyclically reduced word \(w\in F_n\), the presentation
$$
P_n(w)=\left\langle x_0,\ldots,x_{n-1}\,\middle|\,w,\theta(w),\ldots,\theta^{n-1}(w)\right\rangle
$$
is called a \emph{cyclic presentation}. The group \(G_n(w)\) it defines is called a \emph{cyclically presented group}, and \(w\) is called its \emph{defining word}~\cite{Johnson1997}.

\medskip
Following \cite[page 160]{Bogley2014}, given a group presentation \(P=\langle X\mid R\rangle\), an element \(r\in R\) is said to be \emph{freely redundant} if it is freely trivial or if there exists another element \(s\in R\) such that \(r\) and \(s\) are elements of the free group with basis \(X\) and either \(r\) is freely conjugate to \(s\) or \(r\) is freely conjugate to \(s^{-1}\). A presentation is said to be \emph{redundant} if it contains a freely redundant relator and \emph{concise} otherwise. A cyclic presentation \(P_n(w)\) is \emph{orientable} if \(w\) is not a cyclic permutation of the inverse of any of its shifts \cite[page 155]{Bogley2014}, and \emph{non-orientable} otherwise. Redundant cyclic presentations were studied and classified in \cite{ChinyereWilliams2023} (see also \cite{ChinyereWilliams2022}). It follows from the classification that $P_2(w)$ is redundant, then $w=U(x_0,x_1)U(x_1,x_0)^\epsilon$, where $\epsilon=1$ in the orientable case and $\epsilon=-1$ in the non-orientable case. Hence, $w(x_1,x_0)$ can be removed from the presentation, so we get a $2$-generator-$1$-relator group.

\medskip
A classical dichotomy, conjectured by Magnus~\cite{Magnus1932} and proved by Moldavanski\u{\i}~\cite{Moldavanskii1969}, states that a one-relator group contains no non-abelian free subgroup if and only if it is cyclic or a Baumslag--Solitar group $
BS(1,k)=\langle a,t\mid tat^{-1}=a^k\rangle.$ The groups $BS(1,k)$ belong to the Baumslag--Solitar family introduced by Baumslag and Solitar~\cite{BaumslagSolitar1962} as examples of non-Hopfian one-relator groups. Chebotar~\cite{Chebotar1971} subsequently classified the subgroups of one-relator groups that contain no non-abelian free subgroup.

\begin{theorem}[Chebotar~\cite{Chebotar1971}]\label{thm:chebotar}
If the subgroup $C$ of the group $G$ with one defining relation
contains no free subgroup of rank $2$, then either it is Abelian or it
has one of the following presentations:
\[
C=\langle a,b\mid a^2=b^2\rangle,
\]
\[
C=\langle a,b\mid aba^{-1}=b^\kappa\rangle,
\]
where $\kappa$ is an integer.
\end{theorem}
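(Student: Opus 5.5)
This is Chebotar's theorem, a classical result quoted from the literature. I would prove it with the Magnus–Moldavanski\u{\i} machinery for one-relator groups: the Freiheitssatz, the Magnus hierarchy (HNN extensions), and the structure of subgroups of HNN extensions and amalgams acting on Bass–Serre trees.

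The plan is induction on the length of the relator $r$ of $G=\langle X\mid r\rangle$, following Magnus' method. First I dispose of two base situations. If $G$ is free, then $C$ is free with no $F_2$, hence cyclic and abelian. If $r=s^n$ with $n>1$ (torsion case), I use that $G$ is hyperbolic, or more elementarily the B.B. Newman spelling theorem, to show that subgroups without $F_2$ are virtually cyclic. Such a subgroup is then cyclic or infinite dihedral-like. Torsion in one-relator groups is conjugate into $\langle s\rangle$, so finite subgroups are cyclic. The virtually-$\mathbb Z$ subgroups are therefore $\mathbb Z$, finite cyclic, $\mathbb Z\times\mathbb Z_k$ (abelian), or amalgams $\mathbb Z_k *_{\mathbb Z_{k'}} \dots$. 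I check that the non-abelian possibility reduces to $\langle a,b\mid a^2=b^2\rangle$ (the Klein bottle group $BS(1,-1)$ is also of the second form).

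In the inductive step, after a change of generators (Magnus–Moldavanski\u{\i} rewriting, possibly embedding $G$ into $G * \langle y\rangle$ / adjoining a root so that some generator $t$ has exponent sum zero), $G$ becomes an HNN extension $H*_{t: A\to B}$. Here $H$ is a one-relator group with shorter relator and $A,B$ are free Magnus subgroups by the Freiheitssatz. Passing to a subgroup (of an overgroup) preserves the hypothesis on $C$, so it suffices to treat $C\le$ HNN extension. Then $C$ acts on the Bass–Serre tree $T$ with edge stabilisers conjugates of subgroups of $A$ (free). Since $C$ has no $F_2$, the action has a fixed point, an invariant end, or an invariant line. In the first case $C$ is conjugate into $H$ and induction applies. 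In the line case, $C$ is an extension of an edge stabiliser (free, no $F_2$, so cyclic or trivial) by $\mathbb Z$ or $D_\infty$. In the end case, $C$ is an ascending union of edge stabilisers twisted by a hyperbolic element, and a cyclic edge group $\langle b\rangle$ gives $aba^{-1}=b^\kappa$.

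The main obstacle is the classification of the resulting small groups. In the line and end cases $C$ is generated by a cyclic subgroup $\langle b\rangle$ together with an element $a$ conjugating $\langle b\rangle$ into itself. I must show $C$ is either abelian, $BS(1,\kappa)$, or $\langle a,b\mid a^2=b^2\rangle$, and rule out general $BS(m,n)$ with $|m|,|n|>1$, which contain $F_2$. Here I use that the edge groups are Magnus subgroups in which malnormality-type properties hold: $g\langle u\rangle g^{-1}\cap\langle u\rangle$ forces one of the exponents to be $\pm1$, by Newman's/Magnus' results on roots and centralisers in one-relator groups. The $D_\infty$-quotient case gives an amalgam of two cyclic groups over a common index-2 subgroup, which yields $a^2=b^2$. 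The remaining cases are abelian. I also need to check that the abelian and cyclic overlap cases are correctly separated and that $\kappa$ may be any integer.
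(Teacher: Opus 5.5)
The paper gives no proof of this statement; it is simply quoted from Chebotar, so your sketch has to stand on its own. Its skeleton is the standard one: the Magnus--Moldavanski\u{\i} hierarchy, followed by the fixed point / fixed end / invariant line trichotomy for a group without $F_2$ acting on the Bass--Serre tree. That skeleton works when $r$ is not a proper power. Torsion-freeness passes to the base $H$. The pointwise stabiliser of an invariant line, and the $C$-stabilisers of edges on a ray to a fixed end, lie in conjugates of a free Magnus subgroup, so they are $1$ or $\mathbb{Z}$. This yields only locally cyclic groups and $BS(1,\kappa)$, including $\mathbb{Z}^2$ and $\mathbb{Z}*_{2\mathbb{Z}}\mathbb{Z}=\langle a,b\mid a^2=b^2\rangle$.

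\textbf{The gap.} The genuine gap is your torsion base case $r=s^n$, $n\ge2$. There you say the non-abelian virtually cyclic subgroups, which you describe as amalgams of finite cyclic groups, ``reduce to'' $\langle a,b\mid a^2=b^2\rangle$. That cannot happen. $\langle a,b\mid a^2=b^2\rangle=\mathbb{Z}*_{2\mathbb{Z}}\mathbb{Z}$ is torsion-free (it is $BS(1,-1)$), while an amalgam of finite groups is not. You are conflating it with $\mathbb{Z}_2*\mathbb{Z}_2$, and that group really occurs. In $G=\langle a,b\mid a^2\rangle\cong\mathbb{Z}_2*\mathbb{Z}$, the subgroup $\langle a,\,bab^{-1}\rangle\cong\mathbb{Z}_2*\mathbb{Z}_2$ is non-abelian and virtually $\mathbb{Z}$, so it contains no $F_2$. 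Yet it has torsion, so it is neither $\langle a,b\mid a^2=b^2\rangle$ nor any $\langle a,b\mid aba^{-1}=b^\kappa\rangle$, all of which are torsion-free. So this step is not merely unjustified: the statement as displayed is false for one-relator groups with torsion.

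\textbf{How to repair it.} To get a correct theorem you must either assume $r$ is not a proper power, or add $\mathbb{Z}_2*\mathbb{Z}_2$ to the list of exceptions. In the torsion case the right tool is Newman's result that centralisers of non-trivial torsion elements are finite. That rules out $\mathbb{Z}\times\mathbb{Z}_k$, $\mathbb{Z}_k\rtimes\mathbb{Z}$ and amalgams over non-trivial finite subgroups, leaving only $\mathbb{Z}_2*\mathbb{Z}_2$ (for $n$ even). This amended version is in effect how the paper reads the first exceptional case in the proof of Theorem~\ref{thm:main}: there it is treated as $C_2*C_2$ and excluded by abelianisation, so nothing downstream breaks.

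\textbf{A false but unneeded lemma.} The ``malnormality-type'' fact you plan to invoke for edge groups is false. In $BS(2,3)=\langle a,t\mid ta^2t^{-1}=a^3\rangle$, $\langle a\rangle$ is a Magnus subgroup, yet $ta^2t^{-1}=a^3$ has neither exponent equal to $\pm1$. Fortunately you do not need it. $BS(m,n)$ with $|m|,|n|\ge2$ contains $F_2$ and is excluded by hypothesis. In the fixed-end case, the nesting of the $C$-edge stabilisers along the ray already forces $a^{-1}ba=b^{\kappa}$.
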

In particular, one-relator groups satisfy the corresponding Tits alternative: every subgroup either contains a non-abelian free subgroup of rank $2$ or is solvable.

\medskip
Specialising this Tits alternative to redundant cyclic presentations,  it was shown in \cite{ChinyereWilliams2023} that if $P_n(w)$ is redundant with $n\geq 2$, then $G_n(w)$ either contains a non-abelian free group or is solvable. In the orientable case, they provide a complete classification of precisely those cases in which the solvable alternative occurs.

\begin{corollary}\cite[Corollary~4.3]{ChinyereWilliams2023}
Let $P_n(w)$ be an orientable redundant cyclic presentation. Then $G_n(w)$ contains a non-abelian free group unless $n=2$ and $w$ is a cyclic permutation of $x_0^\epsilon x_1^\epsilon$ for some $\epsilon=\pm1$, in which case $G_2(w)\cong\mathbb{Z}$, or $w$ is a cyclic permutation of $x_0^{2\epsilon}x_1^{2\epsilon}$ for some $\epsilon=\pm1$, in which case $G_2(w)\cong BS(1,-1)$.
\end{corollary}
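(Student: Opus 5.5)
The plan is to split according to $n$. First we reduce the case $n\ge 3$ to a deficiency argument. Then we analyse the two-generator one-relator case $n=2$ using Moldavanski\u{\i}'s dichotomy, the abelianisation, and the symmetry of the relator.

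\emph{Step 1 ($n\geq 3$).} Redundancy means that some shift $\theta^k(w)$ with $0<k<n$ is freely conjugate to $w^{\pm1}$. Orientability excludes the sign $-1$, so $\theta^k(w)$ is conjugate to $w$. We may replace $k$ by $d=\gcd(k,n)$, a proper divisor of $n$. Then the relators $w,\theta(w),\ldots,\theta^{n-1}(w)$ fall into at most $d\le n/2$ conjugacy classes. Hence $G_n(w)$ has a presentation with $n$ generators and at most $n/2$ relators, so its deficiency is at least $n/2$. For $n\ge 3$ this deficiency is at least $2$. By Baumslag--Pride, a group of deficiency at least $2$ is large, and in particular contains a non-abelian free group. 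This step matches the classification in \cite{ChinyereWilliams2023}, which we may use to confirm the divisor structure.

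\emph{Step 2 ($n=2$).} By the classification, $w=U(x_0,x_1)U(x_1,x_0)$, and $G=G_2(w)=\langle x_0,x_1\mid w\rangle$ is a one-relator group. By Moldavanski\u{\i}, $G$ contains no free subgroup of rank $2$ if and only if $G$ is cyclic or $G\cong BS(1,k)$. Let $a$ and $b$ be the exponent sums of $x_0$ and $x_1$ in $U$. Then both exponent sums of $w$ equal $a+b$, and
\[
G^{\mathrm{ab}}\cong\mathbb{Z}\oplus\mathbb{Z}_{|a+b|}.
\]
This gives $\mathbb{Z}^2$ when $a+b=0$, which is impossible for a cyclic group. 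Comparing with $BS(1,k)^{\mathrm{ab}}\cong\mathbb{Z}\oplus\mathbb{Z}_{|k-1|}$ gives $|k-1|=|a+b|$. This constrains $k$ but not yet $U$.

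\emph{Step 3 (pinning down $U$).} To pin down $U$ we exploit the involution $\sigma\colon x_0\leftrightarrow x_1$. It sends $w$ to a cyclic permutation of $w$, so it induces an automorphism of $G$. Form $H=G\rtimes\langle t\rangle$ with $t^2=1$ acting by $\sigma$, and set $x=x_0$, so that $x_1=txt$. Then
\[
H=\langle x,t\mid t^2,\ V^2\rangle,\qquad V=U(x,txt)\,t .
\]
Since $[H:G]=2$, $G$ contains $F_2$ if and only if $H$ does. Now $H$ is a one-relator product of $\langle x\rangle\cong\mathbb{Z}$ and $\langle t\rangle\cong\mathbb{Z}_2$ with relator a proper square. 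I would apply the Tits alternative for such generalised triangle groups (Fine--Howie--Rosenberger, Baumslag--Morgan--Shalen). After cyclically reducing and normalising, these results should show that $H$ contains $F_2$ unless $V$, viewed as an element of $\mathbb{Z}*\mathbb{Z}_2$, has syllable length at most $4$. That is, they should force $V=x^{m}t$, that is $U=x_0^{m}$. Then $w=x_0^mx_1^m$ and $G=\langle x_0,x_1\mid x_0^m=x_1^{-m}\rangle$, an amalgamated product of two copies of $\mathbb{Z}$ over the subgroup generated by the $m$-th powers. This group is free of rank $1$ for $|m|=1$, giving $G\cong\mathbb{Z}$. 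It is the Klein bottle group $BS(1,-1)$ for $|m|=2$. It contains $F_2$ for $|m|\ge 3$, since then both factors have index at least $3$ over the amalgamated subgroup.

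The main obstacle is Step 3. The triple of orders $(\infty,2,2)$ is the borderline Euclidean case, where the generalised triangle group theorems are weakest. If the cited results do not cover it, the fallback is to argue directly on the one-relator group $G$. Substituting $y=x_0x_1^{-1}$ and using the Magnus--Moldavanski\u{\i} rewriting, one would show that the $\sigma$-symmetry of $w$ is incompatible with the HNN structure of $BS(1,k)$ for $|k|\ge 2$. The reason is that $\sigma$ would have to act on $BS(1,k)$ inverting the stable letter modulo the normal closure of $a$, which forces $|k|=1$.
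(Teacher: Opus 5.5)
Your Step 1 is correct: the relators fall into at most $d\le n/2$ conjugacy classes, so the deficiency is at least $2$ and Baumslag--Pride applies. Your construction of $H=\langle x,t\mid t^2,V^2\rangle$ as an index-2 overgroup of $G$ is also correct. The paper does not prove this corollary; it quotes it from \cite{ChinyereWilliams2023}, so your argument has to stand on its own.

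It does not, because Step 3 carries the whole $n=2$ case and is only asserted. You say the generalised-triangle-group Tits alternative ``should'' force $V\sim x^mt$, but you give no theorem covering type $(\infty,2,2)$ with that conclusion. This is exactly the borderline case you flag, and genuinely solvable examples live there: $V=xt$ gives $D_\infty$ and $V=x^2t$ gives a virtually $\mathbb{Z}^2$ group. The claim that every cyclically reduced $V$ of syllable length at least $6$ yields $F_2$ is just the corollary restated, so without a precise citation there is no proof.

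Your fallback is sound as far as it goes, but it does only half the job. Made precise, it runs as follows. In the orientable case $\psi(x_0)=1$, $\psi(x_1)=-1$ is a well-defined epimorphism $G\to\mathbb{Z}$ with $\psi\circ\sigma=-\psi$. For $|k|\ge2$, every automorphism of $BS(1,k)$ acts trivially on $BS(1,k)/\mathbb{Z}[1/k]$, because conjugation by the image of the stable letter must still act as multiplication by $k$, not $k^{-1}$. Hence $BS(1,k)$ with $|k|\ge2$ is excluded.

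That leaves $G\in\{\mathbb{Z},\mathbb{Z}^2,BS(1,-1)\}$. You never exclude $\mathbb{Z}^2$, and you say nothing about which words $w$ produce these groups, whereas the corollary classifies $w$ up to cyclic permutation.

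One way to close the gap:
\begin{itemize}
\item Put $a=x_0x_1$ and $t=x_1$, so $\psi(a)=0$, and rewrite $w$ in the letters $a_i=t^iat^{-i}$.
\item In all three remaining groups $\ker\psi$ is trivial or infinite cyclic, hence finitely generated. By Brown's criterion \cite{Brown1987}, the extreme-index letters then occur exactly once, and $\ker\psi$ is free of rank equal to the index span.
\item Rank $0$ gives $w\sim(x_0x_1)^{\pm1}$.
\item Rank $1$ gives $w\sim a^{\epsilon}ta^{\delta}t^{-1}$. This is a cyclic permutation of $x_0^{2\epsilon}x_1^{2\epsilon}$ if $\delta=\epsilon$, and a conjugate of $[x_0,x_1]^{\pm1}$ if $\delta=-\epsilon$.
\item The commutator case is impossible. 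The swap $\sigma$ sends $[x_0,x_1]$ to its inverse, orientability gives $\sigma(w)\sim w$, and no nontrivial element of a free group is conjugate to its inverse.
\end{itemize}
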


They then turn to the complementary, non-orientable case, first disposing of the degenerate situation in which \(U\) is a proper power.

\begin{lemma}\cite[Lemma~4.4]{ChinyereWilliams2023}\label{lem:power}
 Let \(u(x_0,x_1)\) be a reduced word and let \[G=\langle x_0,x_1\mid u(x_0,x_1)u(x_1,x_0)^{-1}\rangle,\] where \(u(x_0,x_1)=v(x_0,x_1)^t\) for some \(t\geq2\). Then \(G\) contains a non-abelian free subgroup except if \(t=2\) and either \(v(x_0,x_1)=x_0^{\pm1}\) or \(v(x_0,x_1)=x_1^{\pm1}\), in which case \(G\) is isomorphic to \(BS(1,-1)\).

\end{lemma}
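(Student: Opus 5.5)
The plan is to reduce to a free product with amalgamation and use the structure of amalgams to find free subgroups. Write $v=v(x_0,x_1)$ and $\bar v=v(x_1,x_0)$. Then
\[
G=\langle x_0,x_1\mid v^t=\bar v^t\rangle .
\]
Let $H=\langle x_0,x_1\rangle$ be free of rank $2$, and let $\phi$ be the automorphism of $H$ swapping $x_0$ and $x_1$. Note that $v$ is not a proper power, since otherwise we absorb the exponent into $t$. Also $v$ and $\bar v$ generate a subgroup of $H$, and the relation identifies $v^t$ with $\bar v^t$.

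The first step is the case where $v$ and $\bar v$ commute in $H$. Since neither is a proper power, this forces $\bar v=v^{\pm1}$. If $\bar v=v$, the relator is freely trivial, so $G=F_2$ and there is a free subgroup. If $\bar v=v^{-1}$, then $v(x_1,x_0)=v(x_0,x_1)^{-1}$. Since $v$ is reduced and not a proper power, the relation becomes $v^{2t}=1$. For $t\ge2$ this is a one-relator group with torsion. It contains a free subgroup of rank $2$ unless it is cyclic, and a two-generator one-relator group with torsion is cyclic only when $v$ is primitive. In the primitive case, for $t\ge 2$, the group is $\mathbb{Z}\ast\mathbb{Z}_{2t}$, which still contains $F_2$. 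So this case is fine.

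The main case is when $v$ and $\bar v$ do not commute. Then $\langle v,\bar v\rangle$ is free of rank $2$ on $a=v$, $b=\bar v$ (a non-abelian two-generator subgroup of a free group is free of rank $2$). Consider the subgroup $K=\langle v,\bar v\rangle\le G$. I expect $K\cong\langle a,b\mid a^t=b^t\rangle$, which is an amalgam $\mathbb{Z}\ast_{\mathbb{Z}}\mathbb{Z}$ of indices $t,t$. This is the step I expect to be the main obstacle: showing that $K$ really embeds with no extra relations. A cleaner route avoids it. Use Moldavanskii's dichotomy: $G$ has no non-abelian free subgroup only if $G$ is cyclic or some $BS(1,k)$. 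Abelianising, $G^{ab}=\mathbb{Z}^2/\langle t(\alpha-\beta,\beta-\alpha)\rangle$, where $(\alpha,\beta)$ are the exponent sums of $v$. So $G^{ab}\cong\mathbb{Z}\oplus\mathbb{Z}_{t|\alpha-\beta|}$, or $\mathbb{Z}^2$ when $\alpha=\beta$. Meanwhile $BS(1,k)^{ab}=\mathbb{Z}\oplus\mathbb{Z}_{|k-1|}$. Hence we would need $k-1=\pm t(\alpha-\beta)$.

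To rule this out, I would instead show that the image of $v$ generates a non-cyclic subgroup containing a root structure impossible in $BS(1,k)$. In $BS(1,k)$ with $|k|\ge2$, any two elements whose $t$-th powers coincide and which do not commute lie in the metabelian normal subgroup, which forces strong constraints. Concretely, I would map $G$ onto $\langle a,b\mid a^t=b^t\rangle$ by an explicit substitution when possible, and otherwise use the Freiheitssatz-type argument that $v,\bar v$ generate a free-product-with-amalgamation subgroup. The latter contains $F_2$ for $t\ge2$, except when $t=2$ and the amalgam is $\langle a,b\mid a^2=b^2\rangle$, the Klein bottle group.

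That exceptional case is the first item in Chebotar's Theorem \ref{thm:chebotar}. The final step is to analyse when $G$ itself is solvable with $t=2$, namely when $G=K$. This happens when $v,\bar v$ generate $H$, i.e. they form a basis of $F_2$. A basis pair of the form $(v,\phi(v))$ with $v$ reduced forces $v=x_0^{\pm1}$ or $x_1^{\pm1}$. This would be proved by Nielsen reduction: the symmetric pair must reduce symmetrically, which forces length one. In that case $G=\langle x_0,x_1\mid x_0^2=x_1^2\rangle\cong BS(1,-1)$, via $a=x_0$ and $t'=x_0x_1^{-1}$ or a similar substitution. In all other cases, $G$ contains either a proper amalgam with $F_2$ or is non-solvable, so by Moldavanskii it contains a non-abelian free subgroup.
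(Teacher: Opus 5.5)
The paper only quotes this lemma from Chinyere--Williams and gives no proof of it, so I judge your argument on its own terms.

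\medskip
\noindent\textbf{What works, and where the gap is.} Your preliminary reductions are fine. Absorbing a power of $v$ into $t$ is harmless. In the commuting case, $\bar v=v$ cannot actually occur, since the first letters differ. The case $\bar v=v^{-1}$ gives $\langle x_0,x_1\mid v^{2t}\rangle$, which has torsion and non-cyclic abelianisation, so it contains $F_2$ by the Magnus--Moldavanski\u{\i} dichotomy. (It is never cyclic, whether or not $v$ is primitive.) The genuine gap is the main, non-commuting case. Everything there rests on the image $K$ of $\langle v,\bar v\rangle$ in $G$ being $\langle a,b\mid a^t=b^t\rangle$. Equivalently, $\langle v,\bar v\rangle\cap N_H(v^t\bar v^{-t})$ must equal the normal closure of $v^t\bar v^{-t}$ inside $\langle v,\bar v\rangle$. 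You correctly flag this as the obstacle but never prove it, and none of your substitutes does the job:
\begin{itemize}
\item The abelianisation comparison only yields $|k-1|=t|\alpha-\beta|$, which is always satisfiable; for $\alpha=\beta$ even $G^{ab}\cong\mathbb{Z}^2\cong BS(1,1)^{ab}$. So it rules nothing out.
\item The Freiheitssatz concerns subgroups generated by subsets of the generators, not by words like $v,\bar v$. There is no ``Freiheitssatz-type'' result producing the amalgam.
\item ``Map $G$ onto $\langle a,b\mid a^t=b^t\rangle$ when possible'' is not an argument.
\item The root-structure sentence is garbled. $BS(1,k)$ is itself metabelian, and for $|k|\ge 2$ two elements with equal $t$-th powers are equal, so non-commuting pairs of that kind do not exist.
\end{itemize}

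\medskip
\noindent\textbf{The $t=2$ endgame.} This contains two more unjustified steps. First, nothing forces a solvable $G$ to coincide with $K$. A Klein bottle group can have index greater than one in $BS(1,-1)=\langle a,s\mid sas^{-1}=a^{-1}\rangle$, e.g.\ $\langle a^3,s\rangle$. Second, even if the images of $v,\bar v$ generate $G$, it does not follow that $v,\bar v$ generate the free group $H$, which is what your Nielsen argument needs; generating pairs of a one-relator quotient need not lift to bases. The Nielsen claim itself is only asserted.

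\medskip
\noindent\textbf{A route worth developing.} The idea buried in your root-structure remark is the right one. $t$-th roots are unique in $\mathbb{Z}$, $\mathbb{Z}^2$ and $BS(1,k)$ for $k\neq -1$, and in $BS(1,-1)$ they are unique for odd $t$. So if $G$ has no $F_2$, one of two things happens.
\begin{itemize}
\item Either $v=\bar v$ in $G$. Then $N(v\bar v^{-1})=N(v^t\bar v^{-t})$, and Magnus's theorem makes $v^t\bar v^{-t}$ conjugate in $H$ to $(v\bar v^{-1})^{\pm1}$. You must then exclude this: exponent sums do it when $\alpha\neq\beta$, but the case $\alpha=\beta$ needs more work.
\item Or $G\cong BS(1,-1)$ with $t$ even. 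Comparing $G^{ab}\cong\mathbb{Z}\oplus\mathbb{Z}_{t|\alpha-\beta|}$ with $\mathbb{Z}\oplus\mathbb{Z}_2$ forces $t=2$ and $|\alpha-\beta|=1$. You still owe a proof that then $v=x_0^{\pm1}$ or $x_1^{\pm1}$.
\end{itemize}
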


Hence, whenever $U$ is a proper power, the non-orientable case is already completely settled, and contributes exactly the value $m=-1$. On the strength of this, together with the orientable classification of Corollary~4.3, it was conjectured that the same property persists for every non-orientable presentation.

\begin{conjecture}\cite[Conjecture~4.5]{ChinyereWilliams2023}\label{conj:cw}
If $P_2(w)$ is a non-orientable cyclic presentation, then $G_2(w)$ either contains a non-abelian free group or is isomorphic to $\mathbb{Z}$ or $BS(1,\pm 1)$.
\end{conjecture}

By Lemma~\ref{lem:power}, Conjecture~\ref{conj:cw} already holds whenever $U$ is a proper power, so it is the case where $U$ is \emph{not} a proper power that is the substance of the conjecture, and this is the case we treat for the remainder of the paper, that is, the group $G(U)$ defined by
\[P(U)=\langle x_0,x_1\mid U(x_0,x_1)U(x_1,x_0)^{-1}\rangle,\] where $U$ is non-empty and not a proper power. The aim of this article is to disprove the conjecture in that case. Using Theorem~\ref{thm:chebotar} together with the natural epimorphism onto $\mathbb{Z}$ carried by every such presentation, we show in Theorem~\ref{thm:main} below that a non-orientable $G=G(U)$ with $U$ not a proper power either contains a non-abelian free subgroup of rank $2$, or is isomorphic to $\mathbb{Z}$, or some solvable Baumslag--Solitar group $BS(1,m)$; the orientable case remains governed by Corollary~4.3, and the proper-power case by Lemma~\ref{lem:power}. We then exhibit, for every integer $m$, an explicit non-orientable presentation, with $U$ not a proper power, for which $G\cong BS(1,m)$; combined with Lemma~\ref{lem:power}, this shows that the exceptional family cannot in general be collapsed to $m=\pm1$.

\begin{theorem}\label{thm:main}
If \(P_2(w)\) is a non-orientable cyclic presentation, then \(G_2(w)\) either contains a non-abelian free subgroup or is isomorphic to \(BS(1,m)\) for some \(m\in\mathbb{Z}\). Moreover, every \(m\in\mathbb{Z}\) is realised.
\end{theorem}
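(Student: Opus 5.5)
Non-orientability of $P_2(w)$ means, by the classification recalled in the introduction, that $w$ is a cyclic permutation of $U(x_0,x_1)U(x_1,x_0)^{-1}$. Hence $G_2(w)=G(U)$ is a one-relator group, since the second relator is freely redundant. Suppose $G$ contains no free subgroup of rank $2$. If $U$ is a proper power, Lemma~\ref{lem:power} gives $G\cong BS(1,-1)$, so I may assume $U$ is not a proper power.

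The first step is to apply Theorem~\ref{thm:chebotar} with $C=G$. This shows that $G$ is abelian, or $G\cong\langle a,b\mid a^2=b^2\rangle$, or $G\cong BS(1,\kappa)$. Note that $\langle a,b\mid a^2=b^2\rangle$ is the Klein bottle group, which is isomorphic to $BS(1,-1)$. In the abelian case I abelianise the relator: the exponent sums of $U(x_0,x_1)U(x_1,x_0)^{-1}$ in $x_0$ and $x_1$ are $(e_0-e_1,\,e_1-e_0)$, where $e_i$ is the exponent sum of $x_i$ in $U$. So $G^{ab}\cong\mathbb{Z}\oplus\mathbb{Z}/|e_0-e_1|$, and this surjects onto $\mathbb{Z}$ via $x_0,x_1\mapsto1$. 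An abelian one-relator group on two generators is $\mathbb{Z}^2$, $\mathbb{Z}$, or $\mathbb{Z}\oplus\mathbb{Z}/k$. The last of these is not abelian-one-relator unless $k\le1$, because one-relator groups with torsion have the relator a proper power, and here the relator lies in the commutator-closed form only if $e_0=e_1$. So the abelian possibilities are $\mathbb{Z}=BS(1,1)$... Correcting that: $\mathbb{Z}^2=BS(1,1)$, and $\mathbb{Z}=BS(1,0)$, using $BS(1,0)=\langle a,t\mid tat^{-1}=1\rangle\cong\mathbb{Z}$. In every case, then, $G\cong BS(1,m)$ for some $m$, which proves the first assertion.

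The second step is the realisation, which I expect to be the real content. For each $m$ I need an explicit $U$, not a proper power, with $G(U)\cong BS(1,m)$. My plan is to change generators to $t=x_0$, $a=x_1x_0^{-1}$, which is the kernel direction of the map to $\mathbb{Z}$. I then look for a short $U$ such that $U(x_0,x_1)U(x_1,x_0)^{-1}$ becomes, after Tietze moves, $tat^{-1}a^{-m}$. A natural first attempt is a family like $U=x_0^{k}x_1x_0^{-k}\ldots$, or $U=x_0x_1^{j}$. For example, $U=x_1x_0^{\,r}$ gives the relator $x_1x_0^{r}x_1^{-r}x_0^{-1}$. Writing $x_1=ax_0$ and computing, I would tune the exponents, possibly using words of the shape $x_0^{p}x_1^{q}$, and check by a Tietze computation that the result is $BS(1,m)$. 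The cases $m=0,\pm1$ also need to be covered, including negative $m$, and may need separate small words. The main obstacle is finding a uniform family whose relator rewrites exactly to a single Baumslag--Solitar relation. I would search small cases ($m=2,3$) by hand, then generalise the pattern and verify it by induction on $|m|$.
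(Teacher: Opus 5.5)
Your first half follows the paper's argument: Theorem~\ref{thm:chebotar} with $C=G$, together with the surjection $\phi\colon G\to\mathbb{Z}$.

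\emph{Where you improve on the paper.} Your identification $\langle a,b\mid a^2=b^2\rangle\cong BS(1,-1)$ is correct. With $y=a^{-1}b$ one has $aya^{-1}=ba^{-1}$, and $ba^{-1}=y^{-1}$ is equivalent to $a^2=b^2$. This is the right way to handle that alternative. The paper instead calls this group $C_2*C_2$ and excludes it by its abelianisation, which misidentifies the group, although its final conclusion is unaffected.

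\emph{The abelian case.} Here you give no actual argument excluding $G\cong\mathbb{Z}\oplus\mathbb{Z}/d$ with $d=|e_0-e_1|\ge 2$. The remark about the ``commutator-closed form'' proves nothing. Moreover, the relator really can be a proper power: $U=x_0x_1^{-1}x_0$ gives $r=(x_0x_1^{-1})^3$. A correct version runs as follows. If $r$ is not a proper power, $G$ is torsion-free by Karrass--Magnus--Solitar, so an abelian $G$ mapping onto $\mathbb{Z}$ is $\mathbb{Z}$ or $\mathbb{Z}^2$. If $r=s^n$ with $n\ge 2$, then $G$ is virtually torsion-free with rational Euler characteristic $-1+1/n\neq 0$. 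Hence $G$ cannot be the virtually cyclic group $\mathbb{Z}\oplus\mathbb{Z}/d$, whose Euler characteristic is $0$.

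\emph{The genuine gap.} You do not prove the second assertion, that every $m\in\mathbb{Z}$ is realised, which is half of the theorem. You describe a search but exhibit no word. The family you propose to start from does not work. For instance, $U=x_1x_0^2$ gives, in your coordinates, $r=a\,t^2a^{-1}t^{-1}a^{-1}t^{-1}=a_0a_2^{-1}a_1^{-1}$ with $a_i=t^iat^{-i}$. The relations $a_i=a_{i+1}a_{i+2}$ can be solved for both $a_{i+2}$ and $a_i$, so $\ker\phi$ is free on $a_0,a_1$. Thus $G\cong F_2\rtimes\mathbb{Z}$, which contains $F_2$.

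\emph{The missing idea.} Exploit the symmetry instead of searching. Take $a=x_0x_1^{-1}$ and $t=x_1$; your choice $t=x_0$, $a=x_1x_0^{-1}$ satisfies the same formulas. The swap then acts by $\theta(a)=a^{-1}$ and $\theta(t)=at$. Since $U(x_1,x_0)=\theta(U(x_0,x_1))$, the relator is $P\,\theta(P)^{-1}$ with $P=U(at,t)$. So you should choose $P$ directly as a word in $a,t$ and pull it back:
\begin{itemize}
\item $P=a^{-k}t^{-1}$, i.e.\ $U=(x_1x_0^{-1})^kx_1^{-1}$, gives $\theta(P)=a^kt^{-1}a^{-1}$ and $r=a^{-k}(t^{-1}at)a^{-k}$. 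Hence $G\cong\langle a,t\mid t^{-1}at=a^{2k}\rangle$.
\item $P=ata^{-k}t$, i.e.\ $U=x_0(x_1x_0^{-1})^kx_1$, gives $\theta(P)=ta^{k+1}t$ and $r=ata^{-(2k+1)}t^{-1}$. Hence $G\cong\langle a,t\mid t^{-1}at=a^{2k+1}\rangle$.
\end{itemize}
These two one-line substitutions cover every $m\in\mathbb{Z}$, including negative $m$ and $m=0,\pm1$. No induction on $|m|$ is needed.
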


We refer to Magnus--Karrass--Solitar~\cite{MKS} and Lyndon--Schupp~\cite{LyndonSchupp} for background on one-relator groups. Two further, independent perspectives on presentations of this symmetric type are worth noting even though we do not use them here: Brown's theory of trees, valuations, and the Bieri--Neumann--Strebel invariant~\cite{Brown1987} gives a separate criterion for when a two-generator one-relator group algebraically fibres over $\mathbb{Z}$.

\medskip
Throughout, we write $F=F(x_0,x_1)$, $r=U(x_0,x_1)U(x_1,x_0)^{-1}\in F$, and $G=F/N(r)$, where $N(r)$ is the normal closure of $r$.

\section{Proof of the main result}
\label{sec:proof}
We begin by recording three preliminary results that will be used in the proof of the main theorem.

\begin{lemma}\label{lem:phi}
The assignment $\phi(x_0)=\phi(x_1)=1$ defines a surjective homomorphism $\phi\colon G\to\mathbb{Z}$.
\end{lemma}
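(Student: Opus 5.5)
We will use von Dyck's theorem: a homomorphism out of $G=F/N(r)$ is determined by where $x_0$ and $x_1$ go, subject only to the relator $r$ being sent to the identity. So we first define $\tilde\phi\colon F\to\mathbb{Z}$ on the free group $F=F(x_0,x_1)$ by $\tilde\phi(x_0)=\tilde\phi(x_1)=1$. This exists and is unique by the universal property of the free group. The task is then to check that $\tilde\phi(r)=0$, so that $\tilde\phi$ factors through $G$.

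For the relator, note that $\tilde\phi$ is the total exponent-sum map. Hence, for any word $V(x_0,x_1)$, the value $\tilde\phi(V)$ is the sum of all exponents appearing in $V$. Swapping $x_0$ and $x_1$ does not change this sum, since both generators go to $1$. Therefore
\[
\tilde\phi\bigl(U(x_1,x_0)\bigr)=\tilde\phi\bigl(U(x_0,x_1)\bigr).
\]
This gives
\[
\tilde\phi(r)=\tilde\phi\bigl(U(x_0,x_1)\bigr)-\tilde\phi\bigl(U(x_1,x_0)\bigr)=0.
\]
One can phrase this more cleanly with the swap automorphism $\sigma$ of $F$ defined by $\sigma(x_0)=x_1$ and $\sigma(x_1)=x_0$. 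The map $\tilde\phi$ satisfies $\tilde\phi\circ\sigma=\tilde\phi$, and $U(x_1,x_0)=\sigma(U(x_0,x_1))$.

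It follows that $N(r)\subseteq\ker\tilde\phi$, because the kernel is a normal subgroup containing $r$. So $\tilde\phi$ induces a homomorphism $\phi\colon G\to\mathbb{Z}$. This $\phi$ is surjective because $\phi(x_0)=1$ already generates $\mathbb{Z}$.

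No step here is a genuine obstacle. The only point needing care is that the relator is taken in exactly the form $U(x_0,x_1)U(x_1,x_0)^{-1}$, with the inverse on the swapped factor, as in the non-orientable setting. That inverse is what makes the exponent sums cancel; in the orientable case $\epsilon=1$ the exponent sum would instead be $2\tilde\phi(U)$. No hypothesis on $U$ is used: neither non-emptiness nor that $U$ is not a proper power.
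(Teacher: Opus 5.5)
Your proposal is correct and follows essentially the same argument as the paper. Both define the exponent-sum map on $F$, observe that swapping $x_0\leftrightarrow x_1$ preserves the exponent sum of $U$ so that $r$ maps to $0$, and conclude that the map descends to a surjection $G\to\mathbb{Z}$ because $\phi(x_0)=1$.
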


\begin{proof}
Define $\phi$ on $F$ by $\phi(x_0)=\phi(x_1)=1$, and let $e$ denote the total exponent sum of $U(x_0,x_1)$, so $\phi(U(x_0,x_1))=e$. Since $\phi$ assigns the same value to both generators, exchanging $x_0\leftrightarrow x_1$ does not change this total, so $\phi(U(x_1,x_0))=e$ as well. Hence,
$$\phi(r)=\phi(U(x_0,x_1))-\phi(U(x_1,x_0))=e-e=0.$$ Therefore, $r\in\ker\phi$ and $\phi$ descends to $G$; surjectivity is clear since $\phi(x_0)=1$.
\end{proof}

Set $K:=\ker\phi\lhd G$, so that $1\to K\to G\xrightarrow{\ \phi\ }\mathbb{Z}\to1$ is exact.

\begin{lemma}\label{lem:sigma}
The assignment $\theta(x_0)=x_1$, $\theta(x_1)=x_0$ defines an automorphism $\theta\in\Aut(G)$ of order $2$ with $\theta(r)=r^{-1}$ in $F$ and $\phi\circ\theta=\phi$. In particular $\theta(K)=K$.
\end{lemma}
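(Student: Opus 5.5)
The plan is to define $\theta$ first on the free group $F=F(x_0,x_1)$ and check that it preserves the normal closure $N(r)$. The swap $x_0\leftrightarrow x_1$ extends uniquely to an automorphism of $F$, since it permutes a free basis; call it $\theta$ as well. Because $\theta$ simply exchanges the two letters, for any word $V(x_0,x_1)$ we have $\theta(V(x_0,x_1))=V(x_1,x_0)$. Applying this to $r$ gives
\[
\theta(r)=\theta\bigl(U(x_0,x_1)\bigr)\,\theta\bigl(U(x_1,x_0)\bigr)^{-1}=U(x_1,x_0)\,U(x_0,x_1)^{-1}=r^{-1}
\]
as an identity in $F$. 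It follows that $\theta(N(r))=N(\theta(r))=N(r^{-1})=N(r)$, so $\theta$ descends to an endomorphism of $G=F/N(r)$.

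Next I will show that the induced map is an automorphism of order $2$. On $F$ we have $\theta^2=\mathrm{id}$, so the same holds on $G$, and hence the induced map is bijective with inverse itself. To see that the order is exactly $2$ rather than $1$, I need $x_0\neq x_1$ in $G$. Using Lemma~\ref{lem:phi} alone would not be enough, since $\phi(x_0)=\phi(x_1)$. Instead I will use the abelianization. Writing $a,b$ for the exponent sums of $x_0,x_1$ in $U$, the relator $r$ abelianizes to $(a-b)(x_0-x_1)$. So $G^{\mathrm{ab}}=\mathbb{Z}^2/\langle (a-b,\,b-a)\rangle$, in which $x_0-x_1$ is nonzero.

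This last step is the only one that needs care. If one insists on order exactly $2$, the potential obstacle is ruling out $x_0=x_1$ in $G$, and the abelianization argument settles it. If $a=b$, the image $x_0-x_1$ is free in $G^{\mathrm{ab}}=\mathbb{Z}^2$. If $a\neq b$, then $x_0-x_1$ has order $|a-b|$, which is nonzero in $G^{\mathrm{ab}}$ unless $|a-b|=1$. In that remaining case I will replace the abelianization with a map to a small nonabelian or dihedral quotient, for instance sending $x_0,x_1$ to distinct reflections in $S_3$ or in a dihedral group. Such a map still respects $r$ whenever the images of $U(x_0,x_1)$ and $U(x_1,x_0)$ agree, and this has to be checked in that case. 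Alternatively, "order $2$" can be read as $\theta^2=\mathrm{id}$ with $\theta$ nontrivial on $F$, which is all the later argument uses.

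Finally, I will check the compatibility with $\phi$. The identity $\phi\circ\theta=\phi$ holds on the generators, since $\phi(\theta(x_0))=\phi(x_1)=1=\phi(x_0)$ and symmetrically for $x_1$, and therefore it holds on all of $G$. Consequently, $k\in K$ implies $\phi(\theta(k))=\phi(k)=0$, so $\theta(K)\subseteq K$. Applying the same argument to $\theta^{-1}=\theta$ gives the reverse inclusion, so $\theta(K)=K$.
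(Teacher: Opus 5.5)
Your core argument is the paper's proof, step for step. You extend the swap to an automorphism of $F$ with $\theta^2=\mathrm{id}$, note that $\theta(r)=r^{-1}$ and hence $\theta(N(r))=N(r)$, descend to $G$, and check $\phi\circ\theta=\phi$ on the generators. The paper does exactly this and nothing more.

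The one place you go beyond the paper is the attempt to show that $\theta$ is nontrivial on $G$. Drop it: that claim is false in general, so your dihedral-quotient step in the case $|a-b|=1$ cannot be completed. Take $U=x_0$, or the paper's own $m=0$ example $U_0=x_1^{-1}$. The relator is then $x_0x_1^{-1}$ (respectively $x_1^{-1}x_0$), so $x_0=x_1$ in $G\cong\mathbb{Z}$ and $\theta$ is the identity on $G$. No quotient of $G$, dihedral or otherwise, can separate $x_0$ from $x_1$. In other cases with $|a-b|=1$, for example $U=x_0x_1x_0$ (the braid group, with $x_0\mapsto(12)$, $x_1\mapsto(23)$ in $S_3$), $\theta$ is nontrivial, so no uniform argument exists. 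Thus ``order $2$'' in the statement can only mean $\theta^2=\mathrm{id}$. That is what the paper proves and all that is used later, and under that reading your proof is complete.
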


\begin{proof}
The map $\theta$ is the free-group automorphism of $F$ swapping the basis $\{x_0,x_1\}$, so $\theta^2=\mathrm{id}$. On the relator, $\theta(r)=U(x_1,x_0)U(x_0,x_1)^{-1}=r^{-1}$, so $\theta(N(r))=N(r^{-1})=N(r)$ and $\theta$ descends to an automorphism of $G=F/N(r)$. Since $\phi(\theta(x_i))=\phi(x_{1-i})=1=\phi(x_i)$ for $i=0,1$, we get $\phi\circ\theta=\phi$, hence $\theta(K)=K$.
\end{proof}

Set $t:=x_1$ and $a:=x_0x_1^{-1}$, so that $x_0=at$, $x_1=t$, $\phi(t)=1$, and $\phi(a)=0$; in particular $a\in K$. The element $a=x_0x_1^{-1}$ plays the central role throughout: it generates $K$ in the examples given in Section~\ref{sec:gap}.

\begin{lemma}\label{lem:sigma-coords}
In the coordinates $(a,t)$, $\theta(t)=at$ and $\theta(a)=a^{-1}$.
\end{lemma}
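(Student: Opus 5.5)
This is a direct computation. Since $\theta$ is a homomorphism of $G$ (Lemma~\ref{lem:sigma}), it suffices to evaluate $\theta$ on the words expressing $t$ and $a$ in terms of $x_0,x_1$, and then rewrite the results in the coordinates $(a,t)$ using $x_0=at$ and $x_1=t$.

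\emph{The image of $t$.} Since $t=x_1$ and $\theta(x_1)=x_0$, we get $\theta(t)=x_0$. Substituting $x_0=at$ gives $\theta(t)=at$.

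\emph{The image of $a$.} Since $a=x_0x_1^{-1}$ and $\theta$ is a homomorphism,
\[
\theta(a)=\theta(x_0)\,\theta(x_1)^{-1}=x_1x_0^{-1}=(x_0x_1^{-1})^{-1}=a^{-1}.
\]

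Both identities already hold in the free group $F$, because $\theta$ is defined there as the automorphism swapping the basis elements. They therefore hold in $G$ as well, since $\theta$ descends to $G$ by Lemma~\ref{lem:sigma}.

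As a consistency check with Lemma~\ref{lem:sigma}, we have $\theta^2(t)=\theta(a)\theta(t)=a^{-1}at=t$ and $\theta^2(a)=a$, so $\theta$ has order dividing $2$ in these coordinates. Also $\phi(\theta(t))=\phi(a)+\phi(t)=0+1=1=\phi(t)$. There is no real obstacle here; the only point needing care is keeping track of the order of multiplication when inverting $x_0x_1^{-1}$.
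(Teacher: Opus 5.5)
Your proof is correct and is the same direct computation as the paper: evaluate $\theta$ on $t=x_1$ and $a=x_0x_1^{-1}$ in $F$, then rewrite using $x_0=at$ and $x_1=t$. Your added checks ($\theta^2=\mathrm{id}$ and $\phi\circ\theta=\phi$ in the new coordinates) are valid but not needed.
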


\begin{proof}
Directly, $\theta(t)=\theta(x_1)=x_0=at$, and $\theta(a)=\theta(x_0x_1^{-1})=x_1x_0^{-1}=(x_0x_1^{-1})^{-1}=a^{-1}$.
\end{proof}
\begin{proof}[Proof of Theorem~\ref{thm:main}]
The group $G=G_2(w)$ is a two-generator one-relator group. If $G$ contains a non-abelian free subgroup of rank $2$, we are done. If $G$ is abelian, then $\phi$ being a surjective homomorphism $G\to\mathbb{Z}$ (Lemma~\ref{lem:phi}) forces $G$ to be infinite and hence torsion-free abelian of rank at most $2$, so $G\cong\mathbb{Z}$ or $G\cong\mathbb{Z}^2\cong BS(1,1)$; either way $G$ is subsumed under the Baumslag--Solitar notation, with $m=0$ or $m=1$ respectively. Otherwise $G$ is non-abelian and contains no non-abelian free subgroup of rank $2$, so Theorem~\ref{thm:chebotar} (with $H=G$) gives either $G\cong C_2*C_2$ or $G\cong BS(1,m)=\langle a,t\mid tat^{-1}=a^m\rangle$ for some $m\in\mathbb{Z}$. The first alternative is impossible, because $C_2*C_2$ has finite abelianisation and so admits no surjective homomorphism onto $\mathbb{Z}$, whereas $G$ does by Lemma~\ref{lem:phi}. Hence $G\cong BS(1,m)$ for some $m\in\mathbb{Z}$ in every case.
\end{proof}

\section{$BS(1,m)$ examples}
\label{sec:gap}

The strong claim by Conjecture \ref{conj:cw} that non-orientability forces $m\in \{-1,0,1\}$ is false.  In fact, every integer $m$ arises. The point is that the order-two automorphism $\theta$ induced by swapping $x_0$ and $x_1$ places no restriction on the sign or size of $m$, and the symmetric form of the relator can just as well produce a non-abelian Baumslag--Solitar group in either direction. One checks directly that the word $U_m$ below is not a proper power in $F$ for any integer $m\neq -1$, so Lemma~\ref{lem:power} does not apply to it and these examples fall squarely within the scope of Theorem~\ref{thm:main}.

\begin{lemma}\label{lem:BSm-example}
Let $m \in \mathbb{Z}$, and set $R_m := U_m(x_0,x_1)\,U_m(x_1,x_0)^{-1}$, where
\[
U_m(x_0,x_1) =
\begin{cases}
(x_1 x_0^{-1})^{m/2} x_1^{-1}, & m \text{ even}, \\[2pt]
x_0 (x_1 x_0^{-1})^{(m-1)/2} x_1, & m \text{ odd}.
\end{cases}
\]
Then $G_m =\langle x_0, x_1 \mid R_m \rangle$ is isomorphic to $BS(1,m) = \langle a, t \mid t^{-1} a t = a^m \rangle$.
\end{lemma}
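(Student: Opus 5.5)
The plan is to carry out the whole computation in the coordinates $(a,t)$ introduced before Lemma~\ref{lem:sigma-coords}, namely $a=x_0x_1^{-1}$ and $t=x_1$. Since $x_0=at$ and $x_1=t$, the substitution $(x_0,x_1)\mapsto(at,t)$ is a Nielsen transformation, so it is an automorphism of $F$. Hence $G_m=\langle a,t\mid R_m(at,t)\rangle$, and it only remains to rewrite $R_m$ as a word in $a$ and $t$ and recognise a conjugate of the Baumslag--Solitar relator.

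The key observations are that $x_1x_0^{-1}=t\,t^{-1}a^{-1}=a^{-1}$, while after the swap the inner block becomes $x_0x_1^{-1}=a$. This is exactly Lemma~\ref{lem:sigma-coords}: $\theta(a)=a^{-1}$. Exponents such as $m/2$ or $(m-1)/2$ may be negative, but this causes no trouble because we are working with integer powers in a free group.

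\emph{Even case}, $m=2k$. We have
\[
U_m(x_0,x_1)=a^{-k}t^{-1},\qquad U_m(x_1,x_0)=(x_0x_1^{-1})^{k}x_0^{-1}=a^{k}t^{-1}a^{-1}.
\]
Therefore
\[
R_m=a^{-k}t^{-1}\,a\,t\,a^{-k},
\]
which is a cyclic conjugate of $t^{-1}at\,a^{-2k}=t^{-1}at\,a^{-m}$. So the single relation is $t^{-1}at=a^m$.

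\emph{Odd case}, $m=2k+1$. We have
\[
U_m(x_0,x_1)=x_0a^{-k}x_1=a\,t\,a^{-k}\,t,\qquad U_m(x_1,x_0)=x_1a^{k}x_0=t\,a^{k}\,a\,t=t\,a^{k+1}t.
\]
After free cancellation of the middle $t\,t^{-1}$,
\[
R_m=a\,t\,a^{-k}t\,t^{-1}a^{-k-1}t^{-1}=a\,t\,a^{-m}t^{-1}.
\]
The relation therefore reads $t\,a^{m}t^{-1}=a$. Conjugating by $t$ turns this into $a^{m}=t^{-1}at$.

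In both cases $G_m\cong\langle a,t\mid t^{-1}at=a^m\rangle=BS(1,m)$, since replacing a relator by a cyclic conjugate or an inverse does not change its normal closure.

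The only real obstacle is bookkeeping. One has to keep the sign of $k$ general and track the order of factors when inverting $U_m(x_1,x_0)$, especially in the odd case, where the cancellation happens in the middle and the leftover $a$ sits outside the conjugation.
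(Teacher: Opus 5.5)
Your proof is correct and follows the paper's argument essentially line for line. You use the same change of coordinates $a=x_0x_1^{-1}$, $t=x_1$, the same computation of $U_m(x_0,x_1)$ and $U_m(x_1,x_0)$ in each parity case, and the same identification of $R_m$ with a cyclic conjugate of $t^{-1}at\,a^{-m}$ (even case) or with $a\,t\,a^{-m}t^{-1}$ (odd case); your remark that $(x_0,x_1)\mapsto(at,t)$ is an isomorphism of free groups simply makes explicit what the paper calls a ``change of coordinates''.
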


\begin{proof}
Notice in particular, that $G_0\cong BS(1,0)\cong\mathbb{Z}$, $G_1\cong BS(1,1)\cong\mathbb{Z}^2$ and $G_{-1}\cong BS(1,-1),$ the Klein bottle group. As before, introduce the change of coordinates
\[
a = x_0 x_1^{-1}, \qquad t = x_1,
\]
so that $x_0 = at$, $x_1 = t$, and $x_0^{-1} = t^{-1}a^{-1}$. By Lemma~\ref{lem:sigma-coords}, the automorphism $\theta$ induced by the swap $x_0 \leftrightarrow x_1$ acts on these coordinates by
\[
\theta(a) = a^{-1}, \qquad \theta(t) = at.
\]
Set $P_m := U_m(at,t)$. Since $U_m(x_1,x_0)$ is obtained from $U_m(x_0,x_1)$ by applying $\theta$, we have $U_m(x_1,x_0) = \theta(P_m)$, and hence
\[
R_m = P_m\,\theta(P_m)^{-1}.
\]
We compute $P_m$ and $R_m$ separately according to the parity of $m$; in both cases, $x_1 x_0^{-1} = t(at)^{-1} = a^{-1}$.

\smallskip
\noindent\textbf{Case 1: $m = 2k$ even} (here $k \in \mathbb{Z}$ may be negative). Substituting,
\[
P_m = \bigl(t(at)^{-1}\bigr)^k t^{-1} = a^{-k} t^{-1}, \qquad
\theta(P_m) = a^{k}(at)^{-1} = a^{k} t^{-1} a^{-1}.
\]
Therefore
\[
R_m = P_m\,\theta(P_m)^{-1}
= a^{-k}t^{-1}\bigl(a^{k}t^{-1}a^{-1}\bigr)^{-1}
= a^{-k}\bigl(t^{-1}at\bigr)a^{-k}.
\]
Hence,  $R_m = 1$ holds if and only if $t^{-1}at = a^{2k} = a^m$, so
\[
G_m \cong \langle a, t \mid t^{-1}at = a^m \rangle = BS(1,m).
\]

\smallskip
\noindent\textbf{Case 2: $m = 2k+1$ odd} (here $k = (m-1)/2 \in \mathbb{Z}$ may be negative). Substituting,
\[
P_m = at\,a^{-k}t, \qquad
\theta(P_m) = a^{-1}(at)\,a^{k}(at) = t\,a^{k}(at) = t\,a^{k+1}t.
\]
Therefore
\[
R_m = P_m\,\theta(P_m)^{-1}
= at\,a^{-k}t\,\bigl(t\,a^{k+1}t\bigr)^{-1}
= at\,a^{-k}\bigl(t\,t^{-1}\bigr)a^{-(k+1)}t^{-1}
= at\,a^{-m}t^{-1}.
\]
Hence,  $R_m = 1$ holds if and only if $t a^m t^{-1} = a$, equivalently $t^{-1}at = a^m$, so again
\[
G_m \cong \langle a, t \mid t^{-1}at = a^m \rangle = BS(1,m).
\]

\smallskip
This proves $G_m \cong BS(1,m)$ in all cases. 
\end{proof}

\section{Final remark}
\label{sec:conclusion}

We first recall the notion of largeness. A group \(G\) is called \emph{large} if it has a finite-index subgroup that admits a surjective homomorphism onto a non-abelian free group.

\medskip

We have determined the structure of the groups arising from redundant cyclic presentations. By \cite[Theorem~4.1]{ChinyereWilliams2023}, if \(P_n(w)\) is a redundant cyclic presentation with \(n\geq 3\), or if \(n=2\) and \(w\) is a proper power, then \(G_n(w)\) is large. In the remaining two-generator case, the Tits alternative established in Theorem~\ref{thm:chebotar} shows that \(G_2(w)\) is either large or solvable. In the solvable case, exactly two possibilities occur: if \(G_2(w)\) is abelian, then \(G_2(w) \cong \mathbb{Z}\) or \(\mathbb{Z}^2\); if \(G_2(w)\) is non-abelian, then by Theorem~\ref{thm:main} we have \(G_2(w) \cong BS(1,m)\) for some \(m \in \mathbb{Z}\), and by Section~\ref{sec:gap}, every such \(m\) actually occurs.

\medskip

This gives a complete picture for redundant cyclic presentations, and shows that they form a particularly tractable subclass of a much broader class: the \emph{composite} cyclically presented groups. This broader class also includes, among others, the families defined by \(P_n(w^k)\), by \(P_n(x_{i_1}^k \cdots x_{i_m}^k)\), and by reducible presentations \cite{ChinyereHowToGeneralize}. We describe this class below and record it as a direction for further work.

\medskip

Let \(u, v \in F(x_0,\ldots,x_{n-1})\). Define the composition
$$
u \circ v
=
v\bigl(u, \theta(u), \ldots, \theta^{n-1}(u)\bigr),
$$
where, in the word \(v\), each generator \(x_i\) is replaced by \(\theta^i(u)\). The corresponding composite cyclic presentation is
$$
P_n(u \circ v)
:=
\left\langle
x_0, \ldots, x_{n-1}
\,\middle|\,
\theta^i(u \circ v),\ 0 \leq i < n
\right\rangle,
$$
and we write \(G_n(u \circ v)\) for the group it defines.
\medskip

Neumann first introduced composite cyclically presented groups \cite{Neumann1979} (see also Havas and Ramsay \cite{HavasRamsay2000}), using them to illustrate how difficult it can be to show that a group is trivial. Havas and Robertson \cite{HavasRobertson2003} later applied the construction to answer questions of Edjvet, Hammond, and Thomas \cite{EdjvetHammondThomas2001} concerning irreducible cyclic presentations of the trivial group. 
\medskip

Although the properties of \(G_n(u \circ v)\) are closely related to those of \(G_n(u)\) and \(G_n(v)\), it can be difficult to determine whether \(G_n(u \circ v)\) is finite even when both \(G_n(u)\) and \(G_n(v)\) are. For redundant cyclic presentations we have a complete picture: outside the explicitly classified solvable two-generator cases, the resulting groups are large, and in the solvable cases we know precisely which groups occur. It would be interesting to determine how much of this structure persists for composite cyclic presentations more generally. This suggests two natural questions for future work.

\medskip
\noindent\textbf{Problem 1.}
Determine necessary and sufficient conditions for a non-redundant composite cyclic presentation \(P_n(u \circ v)\) to define a finite or solvable group.

\medskip
\noindent\textbf{Problem 2.}
What properties of \(G_n(u \circ v)\) can be deduced from \(G_n(u)\) and \(G_n(v)\) alone?
\section*{Acknowledgements}

This research is supported by the University of Pretoria through the Research Development Programme (RDP).

\end{document}